\title{On a question of Demailly-Peternell-Schneider}
\author{Meng Chen and Qi Zhang}
\dedicatory
{Dedicated to the memory of Eckart Viehweg}
\address{Department of Mathematics \& LMNS, Fudan University, Shanghai 200433, China}
\email{mchen@fudan.edu.cn}
\address{Department of Mathematics, University of Missouri, Columbia, MO
65211, USA}
\email{qi@math.missouri.edu}
\thanks{2000 {\it Mathematics Subject
Classification.} 14E30, 14F10}
\thanks{The first author was supported by National Natural Science Foundation of China (\# 11171068), NSFC for Innovative Research Groups (\# 11121101) and Doctoral Fund of Ministry of Education of China (\# 20110071110003)}
\newcommand{\bQ}{{\mathbb Q}}
\newcommand{\bP}{{\mathbb P}}
\newcommand\lrw{\longrightarrow}
\newcommand\rw{\rightarrow}
\newcommand{\KX}{K_X}
\newcommand{\KY}{K_Y}
\newcommand{\OXp}{{\mathcal O}_{X'}}
\newcommand{\OY}{{\mathcal O}_Y}
\newtheorem{thm}{Theorem}[section]
\newtheorem{lem}[thm]{Lemma}
\newtheorem{cor}[thm]{Corollary}
\theoremstyle{definition}
\newtheorem{defn}[thm]{Definition}
\newtheorem{exmp}[thm]{Example}
\newtheorem{rem}[thm]{Remark}
\theoremstyle{remark}
\begin{document}
\begin{abstract}  We  give an affirmative answer to an open question posed by Demailly-Peternell-Schneider \cite{DPS} in 2001
and recently by Peternell  in \cite{P}. Let $f:X\rw Y$ be a surjective morphism from a log canonical pair $(X,D)$ onto a $\bQ$-Gorenstein variety $Y$. If $-(K_X+D)$ is nef, we show that $-K_Y$ is pseudo-effective.\end{abstract}
\maketitle

\pagestyle{myheadings}
\markboth{\hfill M. Chen and Q. Zhang\hfill}{\hfill  On a question of Demailly-Peternell-Schneider \hfill}
\numberwithin{equation}{section}
\section{\bf Introduction}

Projective varieties with numerically effective  anticanonical bundles appear naturally in the Minimal Model Program. In the surface case, these objects include del-Pezzo surfaces, K3 surfaces, Abelian
surfaces and so on. The higher dimensional case has been investigated intensively by several authors, for instance,
Campana \cite{C}, Demailly-Peternell-Schneider \cite{DPS}, Peternell \cite{P} and the second author \cite{Z2}. A key step in understanding
the geometry of these varieties is determining projective morphisms between
them. Let $f: X\rw Y$ be a surjective morphism with $-\KX$ nef. What can one say about $-\KY$? In this direction,
one of the main open questions was asked by Demailly-Peternell-Schneider \cite{DPS} and recently by Peternell \cite{P}:
\medskip

\noindent{\bf Question A}. {\em  Let $X$ be a complex projective manifold on which $-\KX$ is nef. Let $f:X\rw Y$ be a projective morphism onto the projective manifold $Y$.
Is $-\KY$ pseudo-effective?}
\medskip

Throughout we work over the complex number field ${\mathbb C}$.

The main result in  this paper is an affirmative answer to Question A. We  prove the following stronger theorem:
\medskip

\noindent{\bf Main Theorem}. {\em Let $X$ be a normal projective variety and $D$  an effective $\bQ$-divisor on $X$ such that
the pair $(X,D)$ is log canonical. Let $Y$ be a normal and $\bQ$-Gorenstein projective variety. If $f: X\rw Y$ is a surjective
morphism and $-(K_X+D)$ is nef,
then $-K_Y$ is pseudo-effective.}
\medskip

\begin{rem}  Naively, under the same condition as in the main theorem, one might expect that $-\KY$ is nef. Unfortunately,
this is false in general.
\end{rem}

The following example was kindly provided to us by Y. Prokhorov:

\begin{exmp} Embed ${\Bbb P}^1\times {\Bbb P}^1$ into ${\Bbb P}^5$ through the linear system $|(1,2)|$ to obtain a
projective cone $W\subset {\Bbb P}^6$ over the surface $S\cong{\Bbb P}^1\times {\Bbb P}^1$ with the normal sheaf ${\mathcal N}_{S/W}\cong {\mathcal O}_S(-1,-2)$
and  vertex $Q=[0,0,0,0,0,0,1]$. Let $\pi: X\lrw W$ be the blow-up at the vertex $Q\in W\subset {\Bbb P}^6$.
Then $X$ is a nonsingular ${\Bbb P}^1$-bundle over $S$ with the exceptional divisor $E\cong S$. Since $K_X=\pi^*(K_W)+E$,
$-K_X=-\pi^*(K_W)-E$ is nef, one of the curve families on $S$ is $\KX$-trivial, while another curve family $\{l\}$ on $S$ is $K_X$-negative.
Mori's $K$-negative contraction $f=\text{Cont}_{l}:X\rightarrow Y$ maps $S$ onto a curve $C=f(S)\cong {\Bbb P}^1$ on $Y$.
By Mori's theorem \cite{M}, $Y$ is nonsingular. According to Proposition 4.5 and Corollary 4.6 of Prokhorov \cite{Pr}, it follows that
$K_Y\cdot C=2>0$.
Thus $-K_Y$ is not nef.
\end{exmp}

In the above example, $f$ is birational. Recently, we learned from Fujino and Gongyo's paper \cite{FG} that H. Sato
had constructed the following example where $f: X \rw Y$ is flat:

\begin{exmp} Let $\Sigma$ be the fan in ${\Bbb R}^3$ whose rays are generated by $x_1=(1:0:1)$, $x_2=(0:1:0)$,
$x_3=(-1:3:0)$, $x_4=(0:-1:0)$, $y_1=(0:0:1)$ and $y_2=(0:0:-1)$. Their maximal cones are $<x_1,x_2,y_1>$, $<x_1,x_2,y_2>$, $<x_2,x_3,y_1>$, $<x_2,x_3,y_2>$,
$<x_3,x_4,y_1>$, $<x_3,x_4,y_2>$, $<x_4,x_1,y_1>$ and $<x_4,x_1,y_2>$. One can see that $X(\Sigma)$,  the toric threefold corresponding to the fan $\Sigma$,  has a
${\Bbb P}^1$ fibration $f$ onto $Y={\Bbb P}_{\bP1}({\mathcal  O}_{{\Bbb P}^1}\oplus {\mathcal O}_{{\Bbb P}^1}(3))$
(see \cite{FG} for more details). In fact,   $X$ is a weak Fano threefold, but $-K_Y$ is not nef.
\end{exmp}

\begin{rem} If we replace the condition that $-(K_X+D)$ is nef by $-(K_X+D)$ is pseudo-effective, then
the conclusion is again false. This may explain the motivation behind  Question A.\end{rem}

The following example  is found in \cite{Z1}:

\begin{exmp} Let $C$ be a smooth curve of
genus $g$ and $A$ an ample line bundle on $C$ such that
$\deg A>2\deg K_{C}$. Let $S=\text{Proj}_C({\mathcal
O}_C(A)\oplus {\mathcal
O}_{C})$. Then
$K_{S}={\pi}^*(K_{C}+A)-2L$, where $L$ is the
tautological bundle. Clearly  $-K_S$ is big (since $h^{0}(S,-mK_S)\approx c\cdot m^{2}$ for $m\gg 0$ and some constant
$c>0$). Thus $-K_S$ is pseudo-effective. In this setting, $S$ is a $\bP^1$-bundle over $C$, but $-K_C$ is never pseudo-effective if $g>1$.
{\it This example suggests the necessity of the condition  $(X,D)$ is log canonical. Indeed,  if we  let   
$D=-K_S=2(L-{\pi}^{*}A)+{\pi}^{*}(A-K_C)$,
the pair $(S,D)$ is not log canonical.}
\end{exmp}

\section{\bf Proof of the Main Theorem}

Before we  prove our Main Theorem, let us
recall some related definitions and known
results. For general references on the minimal model
program, see (for instance) \cite{K-M, KMM}.

\begin{defn} (Viehweg \cite{V})  Let $X$ be a smooth (quasi-)projective
variety and  ${\mathcal F}$ a torsion-free
coherent sheaf on $X$. We say ${\mathcal F}$ is {\it weakly positive on $X$},
if there exists some Zariski open subvariety $U\subset X$ such that for every ample invertible sheaf ${\mathcal H}$
and every positive integer $m$, there exists some positive integer $n$ such that $
{\hat {\mathcal S}}^{mn}(\mathcal F)\otimes \mathcal H^{n}$ is generated by global sections over $U$,
where ${\hat {\mathcal S}^{n}}$ denotes
the reflexive hull of ${\mathcal S}^{n}$.
\end{defn}

In the proof of our theorem, we need the following weak positivity statement that was originally developed by
Viehweg \cite{V} and, later on,  generalized by Campana \cite{C} and Lu \cite{L}.

\begin{lem} Let $V$ be a smooth projective variety
and $M$
an effective $\bQ$-divisor on $V$ such that the pair $(V,M)$ is  log canonical.
If  $\varphi: V\rightarrow W$ is a surjective morphism onto the nonsingular variety $W$,
then $\varphi_{*}m(K_{V/W}+M)$ is torsion free and
weakly positive for every $m\in {\mathbb Z}_{>0}$ such that $m(K_{V/W}+M)$ is Cartier.
\end{lem}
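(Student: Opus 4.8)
The plan is to dispose of torsion-freeness directly and then reduce the weak positivity to Viehweg's boundary-free theorem by means of an $m$-th root covering construction, with the log canonical hypothesis serving precisely to control the singularities of the cover.

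\emph{Torsion-freeness.} Since $m(K_{V/W}+M)$ is Cartier, the sheaf $\mathcal{L}:=\mathcal{O}_V(m(K_{V/W}+M))$ is invertible, hence torsion-free. If $t$ is a local section of $\varphi_*\mathcal{L}$ annihilated by some nonzero $g\in\mathcal{O}_W$, then $(\varphi^*g)\,t=0$ as a section of $\mathcal{L}$; because $\varphi$ is surjective and $W$ is integral, $\varphi^*g\neq 0$, so torsion-freeness of $\mathcal{L}$ forces $t=0$. Thus $\varphi_*\mathcal{L}$ is torsion-free.

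\emph{Reduction to a relatively normal-crossing situation.} Weak positivity is a condition over \emph{some} Zariski-open $U\subseteq W$, and it is insensitive to proper birational modification of $V$ that is an isomorphism over the generic point of $W$ (one checks it is preserved under the relevant pullback and pushforward). I would therefore first take a log resolution of $(V,M)$ and shrink $W$ so that $\varphi$ is flat, $W$ and all fibers are smooth, and $\mathrm{Supp}(M)$ is relatively simple normal crossing; log canonicity is preserved, and I may assume $mM=:N$ is an integral divisor since $m(K_{V/W}+M)$ is Cartier. Using the section of $\mathcal{O}_V(N)$ cutting out $N$, form the normalized $m$-cyclic cover associated with the $\mathcal{O}_V$-algebra $\bigoplus_{i=0}^{m-1}\mathcal{O}_V(-\lfloor iM\rfloor)$ and resolve to obtain $\rho:\widetilde V\to V$ with $\widetilde V$ smooth; set $\widetilde\varphi=\varphi\circ\rho$. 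The $\mu_m$-action gives an eigensheaf decomposition of $\rho_*\omega_{\widetilde V/W}^{\otimes m}$, and the ramification formula together with the log canonical hypothesis shows that the discrepancy correction $\rho^*(K_V+M)-K_{\widetilde V}$ is effective, which is exactly what is needed for the relevant eigenspace to be $\mathcal{O}_V(m(K_{V/W}+M))$ itself rather than a proper subsheaf. Consequently $\varphi_*\mathcal{L}$ is realized as a direct summand of $\widetilde\varphi_*\omega_{\widetilde V/W}^{\otimes m}$.

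\emph{Conclusion.} Applying Viehweg's weak positivity theorem in the boundary-free case to $\widetilde\varphi_*\omega_{\widetilde V/W}^{\otimes m}$, and using that a direct summand of a weakly positive sheaf is again weakly positive, I conclude that $\varphi_*\mathcal{L}$ is weakly positive, as desired.

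\emph{Main obstacle.} The hard part is the \emph{strictly} log canonical locus. Where $(V,M)$ fails to be klt, the cover acquires non-canonical (only log canonical) singularities along the preimage of the lc centers, and the clean eigensheaf decomposition — hence the direct-summand identification — can degenerate there. Controlling this requires a careful local analysis near the reduced part of $M$ (splitting off the klt contribution and handling the reduced boundary by a residue/adjunction argument), and this is precisely the refinement of Viehweg's method due to Campana and Lu on which the statement rests; an analytic route via positively-curved singular Hermitian metrics on the direct image (of Berndtsson–Păun type) would handle the lc case more uniformly.
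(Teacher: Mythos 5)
A point of reference first: the paper does not prove this lemma at all --- it is quoted as known, with the boundary-free case attributed to Viehweg \cite{V} and the log canonical generalization to Campana \cite{C} and Lu \cite{L}. So your overall strategy (torsion-freeness by hand, then a cyclic-cover reduction of weak positivity to Viehweg's theorem) is indeed the intended route, and your torsion-freeness argument is complete and correct.

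The genuine gap is the one you half-acknowledge at the end, and it is not a peripheral refinement but the entire content of the lemma beyond Viehweg's original statement: the components of $M$ with coefficient $1$, which the lc hypothesis allows and which the Main Theorem actually uses (the boundary $\Delta$ there contains exceptional divisors with coefficient exactly $1$). For such a component your construction degenerates rather than merely becoming delicate: if $M=M_1$ is reduced, then $\lfloor iM\rfloor=iM_1$, the summands of the covering algebra become trivial, and the section of $\mathcal{O}_V(mM_1)$ whose $m$-th root you extract is itself an $m$-th power, so the normalized cyclic cover is the disjoint union of $m$ copies of $V$. There is then no eigensheaf of $\rho_*\omega_{\widetilde V/W}^{\otimes m}$ computing $\mathcal{O}_V(m(K_{V/W}+M))$, and the direct-summand identification --- the engine of the whole argument --- fails precisely along the non-klt locus. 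Writing that this ``requires the refinement due to Campana and Lu'' is a citation, not a proof; what is actually needed is either a perturbation $(1-\eta)$ of the reduced part together with a limiting argument as $\eta\to 0$, or Campana's separate treatment of the reduced boundary via adjunction, and neither is carried out. Two smaller slips: the covering algebra should be $\bigoplus_{i=0}^{m-1}\mathcal{L}^{-i}(\lfloor iM\rfloor)$ for an $m$-th root $\mathcal{L}$ of $\mathcal{O}_V(mM)$, not $\bigoplus_i\mathcal{O}_V(-\lfloor iM\rfloor)$; and one cannot literally ``shrink $W$'' when verifying Definition 2.1, since global generation there is tested with sections over all of $W$ twisted by an ample on $W$ --- one chooses the good open set $U$ inside the fixed $W$ instead.
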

\bigskip

\begin{proof}[{\bf Proof of the Main Theorem}]
 Let $p: Y'\rw Y$  be a resolution of $Y$.   Let $\pi: X'\rw X$ be a log resolution of
$(X, D)$, such that the induced rational map $X'\dashrightarrow Y'$ is in fact a morphism $g': X'\rw Y'$.
  We have  the following commutative diagram:
$$
\CD
X'    @>\pi>> X  @. \\
@Vg'VV   @VVfV   @. \\
Y'    @>p>>    Y @.
\endCD
$$
Since the pair $(X, D)$ is log canonical, we have  $$ K_{X'}={\pi}^{*}(K_{X}+D)+\sum d_{i}E_{i},$$ where
$d_{i}\geq -1$ for each $i$ and $\sum E_{i}$ is a divisor  with normal crossings only.  We may also assume that the support of ${f'}^{-1}(\text{Sing}(Y))$ is a divisor with normal crossings only, where $f'=f\circ \pi$.  Set $Y^0:=Y-\text{Sing}(Y)$.

Pick an ample divisor $L$ on $Y$ and a rational number $\delta>0$. Our goal is to show that $-K_Y+2\delta L$ is equivalent to an effective $\bQ$-divisor, which implies that $-K_Y$ is pseudo-effective.

Let $A$ be a sufficiently very ample divisor on $X$. By Serre's theorem, for every rational number $\varepsilon>0$ and every divisible integer
$m>0$ such that ${\varepsilon}/{\delta}$ is sufficiently small and  $m\delta$ and $m\varepsilon$ are integers, the sheaf
$${\mathcal O}_Y(m\delta L)\otimes \big(f'_*{\mathcal O}_{X'}(m\varepsilon \pi^*(A))\big)^*$$
is generated by global sections. Thus, for all integers $n>0$, it follows that
\begin{eqnarray*}
&&{\mathcal O}_Y(nm\delta L)\otimes \bigotimes^n \big(f'_*{\mathcal O}_{X'}(m\varepsilon \pi^*(A))\big)^*\ \text{and}\\
&& {\mathcal O}_Y(nm\delta L)\otimes S^n \big((f'_*{\mathcal O}_{X'}(m\varepsilon \pi^*(A)))^*\big)
\end{eqnarray*}
are both generated by global sections.

Since $-(K_X+D)$ is nef by assumption, $-(K_X+D)+\varepsilon A$ is $\bQ$-ample. So we may  assume
that $-{\pi}^{*}(K_X+D)+\varepsilon \pi^*(A)$ is $\bQ$-equivalent to an effective $\bQ$-divisor. We may also assume that $$\text{Supp}(-{\pi}^{*}(K_X+D)+\varepsilon \pi^*(A)+\sum E_i)$$ is a divisor with  normal crossings only.
Let
$\Delta =-{\pi}^{*}(K_X+D)+\varepsilon \pi^*(A)+\sum {a}_{i}E_{i}$, where $a_i=-d_i$ whenever $d_{i}<0$ and $a_i=0$ otherwise.

We may write
$$K_{X'/Y}+\Delta \sim_{\bQ}
\varepsilon \pi^*(A)-{f'}^{*}K_{Y}+\hat{\Delta}$$ where $\hat{\Delta}:=\sum_{j\in J}d_jE_j$, $d_j\geq 0$
and $E_j$ is $\pi$-exceptional for each $j\in J$. Clearly the pair $(X',\Delta)$ is log canonical.

Choose a divisible sufficiently large integer $m>0$ such that $m\varepsilon A$, $m\delta L$, $m d_jE_j$
and $-mK_Y$ are all Cartier and define
$$\omega:=m(\varepsilon \pi^*(A)-{f'}^{*}K_{Y}+\hat{\Delta}).$$
By Lemma 2.2, there is  a Zariski open smooth subvariety $U\subset Y$ with $\text{codim}(Y-U)\geq 2$ such that
$$f'_{*}\omega={f'}_{*}{\mathcal O}_{X'}(m(\varepsilon \pi^*(A)+\hat{\Delta}))\otimes {\mathcal O}_Y(-mK_{Y})$$ is locally free and weakly positive over $U$.
Recall that  $\delta L$ is an ample $\bQ$-divisor on $Y$. Since each $E_j$ ($j\in J$) is exceptional with respect to $\pi$,
the effect of $\hat{\Delta}$ can be neglected when   ${f'}_{*}\omega$ is restricted  to $U$. Thus
$${f'}_{*}{\mathcal O}_{X'}(m\varepsilon \pi^*(A))|_U={f'}_*\omega|_U\otimes {\mathcal O}_Y(mK_Y)|_U$$ is locally free over $U$. Furthermore,
weak positivity implies that{\tiny
\begin{eqnarray*}
&&({\mathcal S}^{n}{f'}_{*}\omega)|_U \otimes {\mathcal O}_Y(nm\delta L)|_U\\
&=&{\mathcal S}
^{n}({f'}_{*}{\mathcal O}_{X'}(m\varepsilon \pi^*(A)+\sum md_jE_j))|_U\otimes {\mathcal O}_Y(-nmK_Y)|_U\otimes {\mathcal O}_Y(nm\delta L)|_U\\
&=&{\mathcal S}
^{n}({f'}_{*}{\mathcal O}_{X'}(m\varepsilon \pi^*(A)))|_U\otimes {\mathcal O}_Y(-nmK_Y)|_U\otimes {\mathcal O}_Y(nm\delta L)|_U
\end{eqnarray*}
}is generated by global sections at the generic point of $U$ for some sufficiently large  $n$. Since there is a functorial isomorphism between
$S^n({\mathcal F}^*)$ and $(S^n{\mathcal F})^*$ for locally
free sheaves, we have a non-trivial trace map:
$${{\mathcal S}^{n}\big(({f'}_{*}{\OXp}(m\varepsilon \pi^*(A)))^{*}\big)|_U\otimes {\mathcal S}^{n}({f'}_{*}{\OXp}(m\varepsilon \pi^*(A)))}|_U\rightarrow {\mathcal O}_U.$$
As we have seen, $$\OY(nm\delta L)|_U\otimes {\mathcal S}^{n}\big(({f'}_{*}{\OXp}(m\varepsilon \pi^*(A)))^{*}\big)|_U$$
is generated by global sections over $U$. Thus{\tiny
\begin{eqnarray*}
{\mathcal E}_U&:=&\OY(nm\delta L)|_U\otimes {\mathcal S}^{n}\big(({f'}_{*}{\OXp}(m\epsilon \pi^*(A)))^{*}\big)|_U\otimes
({\mathcal S}^{n}{f'}_{*}\omega)|_U \otimes \OY(nm\delta L)|_U
\\
&=&\OY(-nm\KY+2nm\delta L)|_U\otimes \big({{\mathcal S}^{n}\big(({f'}_{*}{\OXp}(m\varepsilon \pi^*(A)))^{*}\big)|_U\otimes {\mathcal S}^{n}({f'}_{*}{\OXp}(m\varepsilon \pi^*(A)))}|_U\big)
\end{eqnarray*}
}is generated by global sections at the generic point of $U$. There are induced  maps
$$\bigoplus {\mathcal O}_U\overset{\alpha}\lrw {\mathcal E}_U\overset{\beta}\lrw \OY(-nm\KY+2nm\delta L)|_U$$
such that $\alpha$  is surjective and $\beta$
is non-zero.
This proves that
$$H^0(U,\OY(-nmK_{Y}+2nm\delta L)|_U)\neq 0.$$
Because $\text{codim}(Y-U)\geq 2$, the divisor $n(-mK_{Y}+2m\delta L)$ is effective on $Y$.
\end{proof}

As instant applications, we can improve some earlier results in \cite{Z1, Z2}.

\begin{cor} Let $X$ be a normal projective
variety and $D$ an effective $\bQ$-divisor on $X$ such
that the pair $(X,D)$
is log canonical and that $-(K_X+D)$ is nef.
Let $f: X\rightarrow Y$ be a surjective morphism onto $Y$ which is a $\bQ$-Gorenstein projective variety. Then either
\begin{itemize}
\item[(1)]  $Y$ is uniruled; or
\item[(2)] $K_Y\sim_{\bQ} 0$.
\end{itemize}
\end{cor}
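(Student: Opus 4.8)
The plan is to feed the Main Theorem into the pseudo-effectivity criterion for non-uniruledness. By the Main Theorem, the hypotheses of the corollary (that $(X,D)$ is log canonical, that $Y$ is normal and $\bQ$-Gorenstein, that $f$ is surjective, and that $-(\KX+D)$ is nef) already force $-\KY$ to be pseudo-effective. I would then run the dichotomy according to whether $\KY$ is \emph{itself} pseudo-effective, since exactly one of these two cases must occur.

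First I would treat the case in which $\KY$ is not pseudo-effective and show that it yields alternative (1). Because uniruledness is a birational invariant, it suffices to exhibit a uniruled smooth model, so I pass to a resolution $p\colon Y'\rw Y$ and invoke the theorem of Boucksom--Demailly--Paun--Peternell: a smooth projective variety is uniruled if and only if its canonical class is not pseudo-effective. To apply it I must transfer the non-pseudo-effectivity of $\KY$ up to $Y'$. Writing $K_{Y'}=p^{*}\KY+\sum a_i F_i$ with the $F_i$ being $p$-exceptional, I use the standard fact that the push-forward of a pseudo-effective class under a proper birational morphism is again pseudo-effective (push-forward of a closed positive current). Hence if $K_{Y'}$ were pseudo-effective, then $\KY=p_{*}K_{Y'}$ would be too; contrapositively, $\KY$ not pseudo-effective forces $K_{Y'}$ not pseudo-effective, so $Y'$, and therefore $Y$, is uniruled.

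In the remaining case $\KY$ is pseudo-effective. Combining this with the pseudo-effectivity of $-\KY$ from the Main Theorem, and using that the pseudo-effective cone is salient (a class $\alpha$ with both $\alpha$ and $-\alpha$ pseudo-effective satisfies $\alpha\cdot C=0$ for every movable curve $C$, whence $\alpha\equiv 0$ by Boucksom--Demailly--Paun--Peternell duality), I conclude $\KY\equiv 0$ numerically. It then remains to upgrade numerical triviality to $\bQ$-linear triviality $\KY\sim_{\bQ}0$, which is alternative (2).

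The main obstacle is precisely this final upgrade; the pseudo-effectivity bookkeeping above is routine, but passing from $\KY\equiv 0$ to $\KY\sim_{\bQ}0$ requires genuine input. For it I would appeal to the abundance theorem in the numerically trivial case---that a projective variety with the relevant (log canonical / klt) singularities whose canonical class is numerically trivial has torsion canonical class---as established by Kawamata and Nakayama, with the log canonical generality supplied by Gongyo. Some care is needed because $Y$ is only assumed $\bQ$-Gorenstein: one should either check that the appropriate singularity hypotheses are available in our situation, or descend the conclusion from a suitable birational model on which the numerically trivial abundance statement applies. I expect this numerical-to-$\bQ$-linear passage, rather than the cone-theoretic steps, to be where the real work lies.
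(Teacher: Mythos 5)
Your proposal follows essentially the same route as the paper: the paper's entire proof is the observation that if $Y$ is not uniruled then $K_Y$ is pseudo-effective by \cite{BDPP}, which combined with the Main Theorem's conclusion that $-K_Y$ is pseudo-effective yields $K_Y\sim_{\bQ}0$. The two points you elaborate --- passing to a resolution to apply the BDPP criterion and pushing pseudo-effectivity forward to $Y$, and upgrading $K_Y\equiv 0$ to $K_Y\sim_{\bQ}0$ via numerically-trivial abundance --- are both left implicit in the paper, and your caution about the last step is well founded, since the paper asserts that implication without citation while $Y$ is only assumed normal and $\bQ$-Gorenstein.
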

\begin{proof} If Y is not uniruled, then $K_Y$ is pseudo-effective by \cite{BDPP}. However our Main
Theorem says $-K_Y$ is pseudo-effective, which means $K_Y\sim_{\bQ} 0$.
\end{proof}

\begin{rem}  The weaker statement corresponding to (2) in \cite{Z2} is the Kodaira
dimension $\kappa (Y)=0$. 
\end{rem}

\begin{cor} (cf. \cite{Z1}) Let $X$ be a smooth projective variety with nef $-K_X$. Then the
Albanese map $\text{Alb}_{X}:X\rightarrow
\text{Alb}(X)$ is surjective and has connected fibers
\end{cor}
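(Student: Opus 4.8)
The plan is to derive both statements from the Main Theorem. Write $a:=\Alb_X\colon X\rw A$ with $A=\Alb(X)$, and let $Y:=a(X)$ be the Albanese image; recall that $Y$ always generates $A$ as a group. Since $X$ is smooth, the pair $(X,0)$ is log canonical, so the Main Theorem applies to any surjective morphism from $X$ onto a normal $\bQ$-Gorenstein projective variety and yields that the anticanonical class of the target is pseudo-effective.

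First I would prove surjectivity. Suppose instead $Y\subsetneq A$. The strategy is to manufacture a surjective morphism from $X$ onto a positive-dimensional variety of general type and contradict the Main Theorem. By the structure theory of subvarieties of an abelian variety (Ueno, Kawamata), the stabilizer $B:=\{t\in A: t+Y=Y\}$ is an abelian subvariety and the quotient $q\colon A\rw A/B$ maps $Y$ onto a subvariety $W:=q(Y)$ of $A/B$ that is of general type, with $\dim W=\dim Y-\dim B$. Because $Y$ generates $A$ but $Y\neq A$, one has $\dim W\geq 1$ (otherwise $Y$ would be a translate of $B$, forcing $B=A$ and $Y=A$). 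Then $g:=q\circ a\colon X\rw W$ is surjective, and since $X$ is normal it factors through the normalization, giving a surjective morphism $X\rw W^{\nu}$ onto a normal variety of general type. Applying the Main Theorem would force $-K_{W^{\nu}}$ to be pseudo-effective; but $K_{W^{\nu}}$ is big, so $0\sim_{\bQ}K_{W^{\nu}}+(-K_{W^{\nu}})$ would be big, which is absurd. Hence $Y=A$.

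Next I would prove that the fibers are connected. Let $X\overset{h}\rw Z\overset{\tau}\rw A$ be the Stein factorization of the (now surjective) map $a$, so that $h$ has connected fibers, $Z$ is normal, and $\tau$ is finite and surjective; it suffices to show $\deg\tau=1$. Applying the Main Theorem to $h$ gives that $-K_Z$ is pseudo-effective. On the other hand, as $A$ is an abelian variety with $K_A=0$ and $\tau$ is finite with $Z$ normal, the ramification formula reads $K_Z=\tau^{*}K_A+R=R$ with $R\geq 0$ effective; thus $K_Z$ is pseudo-effective as well, forcing $K_Z\equiv 0$ and hence $R=0$. So $\tau$ is unramified in codimension one, and by purity of the branch locus it is \'etale. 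An \'etale cover of an abelian variety is again an abelian variety, so the universal property of the Albanese map forces $\tau$ to be an isomorphism. Therefore $a$ is surjective with connected fibers.

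The step I expect to be the main obstacle is verifying the hypotheses of the Main Theorem for the auxiliary targets, specifically the $\bQ$-Gorenstein condition. Normality is automatic, since $Z$ arises from a Stein factorization and $X\rw W$ factors through $W^{\nu}$, but neither a general-type subvariety of an abelian variety nor a finite cover of one need be $\bQ$-Gorenstein a priori. I would address this by replacing the target with a suitable $\bQ$-Gorenstein model that retains the relevant property---for $W^{\nu}$, one keeping the canonical class big; for $Z$, one keeping $K_Z$ effective and $\tau$ finite---so that the Main Theorem applies and the contradictions above go through. Making this reduction precise while preserving the surjectivity of the morphism from $X$ is the delicate part of the argument.
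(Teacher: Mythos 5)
Your proposal takes a genuinely different route from the paper. The paper's proof is a one-shot argument: Stein factorize $\Alb_X$ as $X\rw V\rw \Alb(X)$, assume $V$ smooth, deduce $\kappa(V)=0$ from the Main Theorem (via Corollary 2.3, since $V$ is generically finite over a subvariety of an abelian variety and hence not uniruled while $\kappa(V)\geq 0$), and then invoke Kawamata's characterization of abelian varieties \cite{K} to conclude $V$ is abelian, so that the universal property forces $V=\Alb(X)$; this yields surjectivity and connectedness simultaneously. You instead split the statement in two: surjectivity via Ueno's fibration of a non-degenerate proper subvariety of an abelian variety onto a positive-dimensional variety of general type (contradicting pseudo-effectivity of the anticanonical class of the target), and connectedness via the ramification formula, purity of the branch locus, and the Serre--Lang theorem that an \'etale cover of an abelian variety is abelian. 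Both steps are sound in outline --- in particular your observation that $\dim W\geq 1$ because the Albanese image generates $\Alb(X)$, and your deduction $R=0\Rightarrow\tau$ \'etale $\Rightarrow Z$ abelian $\Rightarrow\deg\tau=1$, are correct --- and your second step essentially reproves the special case of Kawamata's theorem that the paper cites. What the paper's route buys is brevity; what yours buys is independence from \cite{K} at the cost of two separate contradiction arguments.

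The genuine weak point is the one you flag yourself: the Main Theorem requires the target to be normal \emph{and} $\bQ$-Gorenstein, and neither $W^{\nu}$ (the normalization of a general-type subvariety of $A/B$) nor $Z$ (the Stein factorization) is $\bQ$-Gorenstein a priori. Your proposed repair --- ``replace the target by a suitable $\bQ$-Gorenstein model'' --- does not work as stated: any such model is in general only birational to the target, so the map from $X$ degenerates to a rational map, and resolving its indeterminacy replaces $X$ by a blow-up $X'$ on which $-K_{X'}=-\pi^{*}K_X-E$ with $E\geq 0$ exceptional; since $-E$ cannot be absorbed into an effective boundary $D$, the hypothesis ``$-(K_{X'}+D)$ nef with $(X',D)$ log canonical'' is lost. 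So the reduction you defer is not merely delicate but, along the line you sketch, obstructed. To be fair, the paper's own proof glosses over the very same issue with the phrase ``we may even assume that $V$ is smooth,'' which would need an analogous justification (e.g., rerunning the proof of the Main Theorem with the resolution $Y'$ in place of $Y$ and pushing forward, so that the conclusion holds for $-K_Y$ as a Weil divisor class on a merely normal $Y$). If you make that strengthened form of the Main Theorem explicit, both of your steps go through; without it, the proof is incomplete at exactly the point you identified.
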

\begin{proof}  Let $a:X\rw V$ be the induced morphism with connected fibers after the Stein factorization of  $\text{Alb}_X$.
We may even assume that $V$ is smooth. Then $\kappa(V)=0$ by the Main Theorem, and Kawamata's theorem in \cite{K} implies that $V$ is an abelian variety. Thus it follows that $V=\text{Alb}(X)$ due to the universal property of $\text{Alb}_X$.
\end{proof}
\medskip

{\bf Acknowledgment.}  Chen  thanks The Max-Planck-Institut fuer Mathematik (Bonn) for the invitation and the generous
support in 2011. Zhang thanks the Department of Mathematics and the Key Laboratory of Mathematics for Nonlinear
Sciences (MOE), Fudan University for the hospitality and support during his visit in the summer of 2011 when part of the paper was written.

\end{document}